\title{Switching Strategies for Linear Feedback Stabilization with Sparsified State Measurements}
\author{Kang Kang, Sourabh Bhattacharya, Tamer Ba\c{s}ar}
\date{}                                           % Activate to display a given date or no date
\author{Kang Kang\quad \quad Sourabh Bhattacharya\quad \quad Tamer Ba\c{s}ar% <-this % stops a space
\thanks{The authors are with the Coordinated Science Laboratory, University of Illinois at Urbana-Champaign, Urbana, IL 61801 
        {\tt\small \{kkang7,sbhattac,basar1\}@illinois.edu}}%
\thanks{This work was supported in part by a grant from AFOSR.}
}
\begin{document}
\maketitle
\begin{abstract}
In this paper, we address the problem of stabilization in continuous time linear dynamical systems using state feedback when compressive sampling techniques are used for state measurement and reconstruction. In \cite{bhatta}, we had introduced the concept of using $l_{1}$ reconstruction technique, commonly used in sparse data reconstruction, for state measurement and estimation in a discrete time linear system. In this work, we extend the previous scenario to analyse continuous time linear systems.

   We investigate the effect of switching within a set of {\it{sparsifiers}}, introduced in \cite{bhatta}, on the stability of a linear plant in continuous time settings. Initially, we analyze the problem of stabilization in low dimensional systems, following which we generalize the results to address the problem of stabilization in systems of arbitrary dimensions.
\end{abstract}
\section{Introduction}

In the last few years, compressive sensing (CS) has emerged as a topic of immense interest in the signal processing, information theory and machine learning communities \cite{candesicm},\cite{rgbcs}. The overarching principle  of CS states that by using random, non-adaptive projections, one can accurately reconstruct special signals (sparse or compressible) using far fewer measurements than what is required using the traditional techniques suggested by Nyquist and Shannon. Therefore, compressive sensing promises to provide a more efficient sensing technique than the existing ones. Reference \cite{rgbweb} provides an extensive collection of papers related to the theory behind compressive sensing, and its applications. The single-pixel camera \cite{wak1}, feature-specific imager \cite{fssimage} and CMOS separable transform image sensor \cite{gtimage} are examples of real sensors that work on principles of compressive sampling and sparse acquisition. This paper investigates the implications of using such sensors to provide feedback in dynamical systems.
%many more sensors listed in \cite{rgbweb}, it is now possible to design and manufacture real sensors

Recently, there has been a growing interest in the control community regarding compressive sensing. In \cite{masaaki}, the authors investigate a feed-forward system in which a compressive sampling system is used to compress the control signals using sparse representations. In \cite{masaaki1}, a networked control architecture is improved by using compressive sensing to provide more robust packet delivery over a lossy channel. In \cite{naga1}, a compressive sensing method was used to improve bandwidth limitations on a remote-controlled system. These previous papers address utilizing compressive sensing algorithms to improve pre-existing control architecture. In \cite{daiyuk}, the authors address the problem of recovering an initial state under sparsity constraints. They provide sufficient conditions on the number of available observations in order to recover the initial state for both deterministic and stochastic systems. In a similar vein, the authors in \cite{wak2} illustrate the technique of recovering initial states from sparse observations by using a simple diffusion system.

In this work, we extend the analysis in \cite{bhatta} which investigates the use of sensors based on CS techniques for providing feedback in control systems. Earlier work \cite{bhatta} idealized the notion of sensors using CS techniques, and introduced an abstract concept of compressive sensing device (CSD). A CSD is a sensor that measures the state of a control system. The error in the measurement is dependent on the sparsity of the underlying state. Based on the error bounds provided by $l_1$ reconstruction techniques \cite{candesicm}, the sparsity of the underlying state must be within a specified level for perfect reconstruction by a CSD. In order to circumvent this limitation so that the system performance does not degrade in the face of non-sparse states, the paper has proposed the use of a {\it{sparsifier}} in order to induce sparsity in the states. Furthermore, it has provided a criteria to design a linear sparsifier to achieve stabilization in a plant. It was shown that for discrete-time linear systems stabilization can be achieved if the number of unstable poles is less than the level of sparsity that the CSD can handle. The scenario in which the condition is violated had remained unaddressed; this paper is an effort in that direction. Here, we investigate the possibility of stabilizing a plant by switching within a class of sparsifiers, each of which fails to stabilize the system individually.

%The advantage of switching arises from the fact that it might be possible to stabilize a plant by switching within a class of sparsifiers, each of which cannot individually stabilize the system. although there are instances in which it might not be possible to stabilize a plant using a single sparsifier. This is especially the case when the number of unstable modes is greater than a threshold.

 %We address these problems in this paper in a continuous-time setting. We also provide a method to solve a finite horizon LQR problem based on very recent techniques 

 %and study its effect on the stabilization and regulation of the plant. The possibility of using a class of linear sparsifiers has not been studied in the past. However, a plant with multiple sparsifiers can be thought of as multiple sub-systems each with one sparsifier. When the problem is looked at from this perspective it leads us to switching systems. 

Stabilization of switched systems has been a classical control problem which has been studied extensively in the past 20 years. Some of the basic problems in stability of switched systems have been listed in \cite{dlib2}. Reference \cite{dlib} provides a valuable gateway into the various available techniques available for stabilizing switched systems. For some linear systems, the approach is to find a Lyapunov function which attains a negative value for possible values of the states corresponding to each subsystem. Other methods include time-dependent switching and using multiple Lyapunov functions. Reference \cite{tbtempo} considered the case in which multiple Lyapunov functions are used, and developed state-dependent switching rules for global asymptotic stability. Reference \cite{branicky} provides techniques to stabilize both switched and hybrid systems, and \cite{dlib} contains an exhaustive review of the work that has been done regarding the stability of switched systems.

%and provide a strategy on obtaining control and switching instants in order to minimize a cost function. However

The main contribution of this paper is that we use the existing theory in hybrid switched systems to circumvent the shortcomings of the system architecture proposed in \cite{bhatta}, which is based on principles of sparse reconstruction techniques. The organization of the paper is as follows. In Section 2, we present a brief primer to compressive sensing. In Section 3, we present the problem formulation based on the architecture proposed in \cite{bhatta}. In Section 4, we provide an example of a two dimensional system to illustrate the CSD stability concept. In Section 5, we generalize our results on stability to systems of arbitrary dimensions. In Section 6, we provide simulation results. Finally, we conclude in Section 7 and provide some future research directions. 

%In Section 3, we provide a brief discussion on the  to the work in \cite{bhatta}, which is the main motivation for this paper. In Section 4 we formulate the problem and discuss the implications and why it is important to consider a class of sparsifiers. In Section 5 we provide the formulation for the class of stabilizing sparsifiers, illustrate a dynamical system utilizing this class of sparsifiers, and provide some proofs which comment on stabilization of CSD dynamical systems. In Section 6 we provide simulation results. Finally, we conclude in Section 7 and provide some future research directions.

\section{Background}
In this section, we present a brief introduction to compressive sensing techniques and their relation to sparse signal reconstruction. The content in this section is at times verbatim, but a shorter version of Section 2.3 from \cite{laska} which provides an excellent, comprehensive survey of important results in compressive sensing. It is included here as background material for the sake of completeness.

In the CS framework, we acquire a signal {\bf{x}}$\in \mathbb{R}^{N}$ via linear measurements
\begin{eqnarray}
{\bf{y}} = \Phi {\bf{x + e}}
\end{eqnarray}
where $\Phi$ is an $M\times N$ measurement matrix modeling the
sampling system, {\bf{y}}$\in \mathbb{R}^{M}$ is the vector of samples acquired,
and {\bf{e}} is an $M\times 1$ vector that represents measurement errors.
If {\bf{x}} is $K$-sparse when represented in the {\it{sparsity basis}} , i.e., ${\bf{x}} =\Psi\alpha$ with $||\alpha||_0 := |\textrm{supp}(\alpha)| \leq K$, then one can acquire
only $M = O(K \log(N/K))$ measurements and still recover
the signal {\bf{x}} \cite{donohotit}, \cite{candesicm}. A similar guarantee can be obtained for approximately sparse, or {\it{compressible}}, signals. Observe that
if $K$ is small, then the number of measurements required can
be significantly smaller than the Shannon-Nyquist rate.

In \cite{candestit}, Cand$\grave{e}$s and Tao introduced the {\it{restricted isometry property}} (RIP) of a matrix $\Phi$ and established its important role in CS. From \cite{candestit}, we have the definition:
\vspace{0.1in}
\par
\noindent
{\bf{Definition 1}}. A matrix $\Phi$ satisfies the RIP of order $K$ with constant $\delta\in (0, 1)$ if
\begin{eqnarray}
(1-\delta)||{\bf{x}}||_{2}^{2}\leq ||\Phi\bf{x}||_{2}^{2}\leq (1+\delta)||{\bf{x}}||_{2}^{2} 
\end{eqnarray}
holds for all {\bf{x}} such that $||{\bf{x}}||_{0} \leq K$.

In words, $\Phi$ acts as an approximate isometry on the set
of vectors that are $K$-sparse in the basis $\Psi$. An important
result is that for any unitary matrix $\Psi$, if we draw
a random matrix $\Phi$ whose entries $\phi_{ij}$ are independent realizations
from a sub-Gaussian distribution, then $\Phi\Psi$ will
satisfy the RIP of order $K$ with high probability provided that
$M = O(K log(N/K))$ \cite{bararip}. In this paper, without any loss of
generality, we fix $\Psi= {\bf{I}}$, the identity matrix, implying that $x = \alpha$.

The RIP is a necessary condition if we wish to be able
to recover all sparse signals {\bf{x}} from the measurements {\bf{y}}. Specifically, if $||x||_{0} = K$, then $\Phi$ must satisfy the lower bound of the RIP of order $2K$ with $ \delta< 1$ in order to ensure
that any algorithm can recover {\bf{x}} from the measurements {\bf{y}}.
Furthermore, the RIP also suffices to ensure that a variety
of practical algorithms can successfully recover any sparse or
compressible signal from noisy measurements. In particular,
for bounded errors of the form $||e||_{2}\leq \epsilon$, the convex program
\begin{eqnarray}
x = \arg\min_{\theta}\|{\bf{x}}\|_{1}\quad{\text{  s.t.  }} \|\Phi{\bf{x}}-\bf{y}\|_{2}\leq \epsilon
\end{eqnarray}
can recover a sparse or compressible signal {\bf{x}}. The following
theorem, a slight modification of Theorem 1.2 from \cite{candesrip},
makes this precise by bounding the recovery error of {\bf{x}} with
respect to the measurement noise norm, denoted by $\epsilon$, and with respect to the best approximation of {\bf{x}} by its largest $K$ terms,
denoted by ${\bf{x}}_K$.

%\begin{Thmzero} 
 Suppose that $\Phi\Psi$ satisfies the RIP of order $2K$
with $\delta < \sqrt{2}-1$. Given measurements of the form $y = \Phi\Psi x+e$, where $\|{\bf{e}}\|\leq \epsilon$, the solution to (3) obeys
\begin{equation}
\|\hat{{\bf{x}}}-{\bf{x}}\|_{2}\leq C_{0}\epsilon+C_{1}\frac{\|{\bf{x}}-{\bf{x_{K}}}\|}{\sqrt{K}}
\label{eqn:recon}
\end{equation}
where
\[C_{0}=\frac{4(1+\delta)}{1-(\sqrt{2}+1)\delta},\quad C_{0}=\frac{1+(\sqrt{2}-1)\delta}{1-(\sqrt{2}+1)\delta}\]

%\end{}
 While convex optimization techniques like (3) constitute a powerful
method for CS signal recovery, there also exist a variety
of alternative algorithms such as CoSaMP \cite{needellCoSAMP} and iterative hard thresholding (IHT) \cite{blumeIHT} that
are known to satisfy similar guarantees under slightly stronger
assumptions on the RIP constants. In this work, we assume that measurement noise is absent, i.e., $\epsilon=0$.

\section{Problem Formulation}
In this section, initially we introduce the control architecture as proposed in \cite{bhatta}. Based on this architecture, we formulate the problem statement.

As introduced in \cite{bhatta}, the CSD is an idealized device that works on the principles of $l_1$ reconstruction algorithm \cite{candesicm}. It is assumed that the reconstruction error of the CSD obeys (\ref{eqn:recon}). Therefore, if the input state is $S$-sparse, there is no measurement error i.e, the output of the CSD perfectly replicates the input. Figure \ref{FigOne} illustrates the architecture of the control loop when a CSD is used to provide linear state feedback to stabilize a plant. In \cite{bhatta}, the idea of using a sparsifier, $G$, has been proposed, which replaces some of the state measurements with zeroes instead of their true value. This increases the sparsity of the resultant state vector to the desired value so as to ensure perfect measurement using the CSD. However, we can observe that if the number of unstable poles are greater than the maximum allowable sparsity, there are instances for which stabilization cannot be ensured by linear state feedback law. An example of such an instance is as follows. Consider the following decoupled 2-dimensional system:

 \begin{figure}[h] 
 \centering
  \caption{CSD Dynamical System Framework}
  \label{FigOne}
  \includegraphics[scale=0.7]{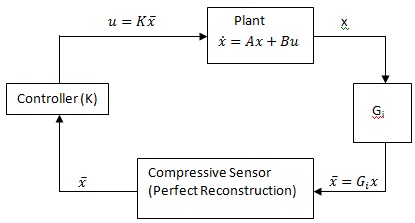}
 \end{figure}

 %The state of the plant $x \in \mathbb{R}^n$ is sparsified by $G$ before passing through the CSD. To guarentee perfect reconstruction, the sparsifier only keeps $S$ states while nulling the other $n-S$ states. The linear controller $K$ will then attempt to stabilize the plant with only a portion of the states. The framework in Figure \ref{FigOne} follows from \cite{bhatta}. But \cite{bhatta} only provided stability results for the case where the number of unstable poles are less than $S$. However, it is very likely to have systems which does not meet this criteria. Consider the following example.
\begin{eqnarray}
\dot{x}=
\begin{bmatrix}
1 & 0\\
0 & 2
\end{bmatrix}x+
\begin{bmatrix}
1 & 0\\
0 & 1
\end{bmatrix}u\nonumber\\
\end{eqnarray}
Let us choose the sparsifier
\(
G=
\begin{bmatrix}
1 & 0\\
0 & 0\\
\end{bmatrix}
\). Let $ K=[k_{1}\quad k_{2};k_{3}\quad k_{4}]$ be the feedback gain. The closed-loop system then has the following form:
\begin{eqnarray}
\dot{x}=
\begin{bmatrix}
1-k_1 & -k_2\\
0 & 2
\end{bmatrix}x\nonumber
\end{eqnarray}
from which it is obvious that the pole corresponding to $\lambda=2$ cannot be shifted for any value of $K$. Therefore, the system cannot be stabilized by providing linear state feedback by using the above sparsifier. This leads to the following question: Is it possible to construct a class of sparsifiers, independent of the structure of the system matrices, $A$ and $B$, for which it might be possible to stabilize the system by switching within the class? We assume that any single sparsifier in the class leads to an unstable closed-loop system. In the following, we construct a class of sparsifiers that depends on the maximum sparsity in the state that can be tolerated by the CSD for perfect reconstruction, denoted by $S$, and the dimension of the system.

%In this paper, we investigate the possibility of stabilizing the system by switching within a class of sparsifiers, each of which renders the system unstable individually. %however stabilize the systems with the number of unstable poles exceeding the parameter $S$ of the CSD.
%Where $x \in \mathbb{R}^n$, $A \in \mathbb{R}^{n \times n}$. 

 Before defining the class of sparsifiers $\mathcal{G}_S$, let us introduce a vector shift operation which we denote by $rshift(a, b)$, where $a \in \mathbb{R}^n$ and $b \in \mathbb{Z}_{+}$. $rshift(a, b)$ performs a right shift operation on each element in the vector $a$ and substitutes zeroes in the shifted slots. Let $a=[a_1, a_2, a_3, \hdots, a_n]$, then $rshift(a, b)=[0, \hdots, 0, a_{1+b}, a_{2+b}, \hdots, a_{n-b}]$ where the output of the operation is still a vector in $\mathbb{R}^n$.\\

We now define a class of sparsifiers that introduces $n-S$ zeroes in the state measurement, where $n$ is the dimension of the state space. Let the set of sparsifiers be denoted by $\mathcal{G}_S=\{G_i\}$, where $i \in \{1,2,...,\lceil {\frac{n}{S}} \rceil\}$ and $G_i \in \mathbb{R}^{n \times n}$. For each $i \in \{1,2,...,\lceil {\frac{n}{S}} \rceil\ - 1 \}$,  define each $G_i$ as a diagonal matrix with the diagonal denoted as $\mathcal{D}_i \in \mathbb{R}^n$. $\mathcal{D}_1$ is defined as follows
\[
\mathcal{D}_1 = 
\begin{bmatrix}
a_{j}
\end{bmatrix}
\]
\[
  a_j = \left\{ 
  \begin{array}{l l}
    g_i\in \mathbb{R} & \quad \textrm{if j $= \{1,2,..., \lfloor { \frac{n}{S} } \rfloor \}$}\\
    0 & \quad \textrm{else}\\
    
  \end{array} \right.
\]
For $i \in \{2,3,...,\lceil {\frac{n}{S}} \rceil\ \}$, $\mathcal{D}_i=rshift(\mathcal{D}_1, i \times \lfloor {\frac{n}{S}} \rfloor)$.\\
%With $\mathcal{D}_{\lceil {\frac{n}{S}} \rceil}=\mathcal{I}-\sum_{i=1}^{\lfloor {\frac{n}{S}} \rfloor} {G_i}$.

%Note that $\sum_{i=1}^{i=\lceil {\frac{n}{S}} \rceil} {G_i}=\mathcal{I}$

As an example, let us consider the case when $n=5$ and $S=2$. The set $\mathcal{G}_S$ then consists of the following set of matrices:
\begin{eqnarray}
&G_1 = 
\begin{bmatrix}
g_1 & 0 & 0 & 0 & 0\\
0 & g_1 & 0 & 0 & 0\\
0 & 0 & 0 & 0 & 0\\
0 & 0 & 0 & 0 & 0\\
0 & 0 & 0 & 0 & 0\\
\end{bmatrix}
\textrm{, }
G_2 = 
\begin{bmatrix}
0 & 0 & 0 & 0 & 0\\
0 & 0 & 0 & 0 & 0\\
0 & 0 & g_2 & 0 & 0\\
0 & 0 & 0 & g_2 & 0\\
0 & 0 & 0 & 0 & 0\\
\end{bmatrix}\nonumber\\
&\textrm{ }
G_3 = 
\begin{bmatrix}
0 & 0 & 0 & 0 & 0\\
0 & 0 & 0 & 0 & 0\\
0 & 0 & 0 & 0 & 0\\
0 & 0 & 0 & 0 & 0\\
0 & 0 & 0 & 0 & g_3\\
\end{bmatrix}\nonumber
\end{eqnarray}

 Each sparsifier in the set $\mathcal{G}_S$ introduces at least 3 ($n-S$) zeroes in the state.

Now we present the problem statement. Consider the control architecture shown in Figure \ref{FigF}. The pair $(A,B)$ is assumed to be stabilizable. Moreover, we assume that there is no $K$ such that $A_{cl}=A-BKG_{i}$ is stable for any single $G_{i}\in \mathcal{G}_{S}$. We want to find a switching scheme among the sparsifiers in the class $\mathcal{G}_{S}$ such that the system can be stabilized.  

 \begin{figure}[htb]
 \centering
 \caption{Switched System with CSD}
 \label{FigF}
 \includegraphics[scale=0.4]{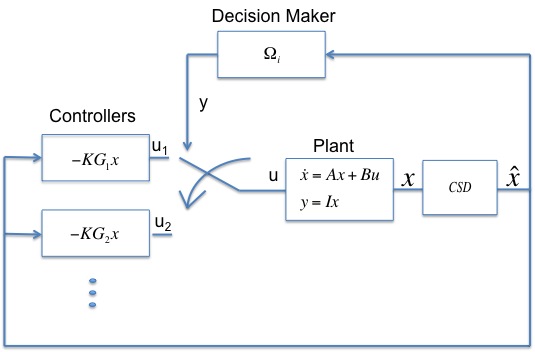}
 \end{figure}

In the next section, we investigate the problem for a two dimensional system.

\section{Two-dimensional System Example}
Let us consider a two dimensional system with sparsified feedback. In this section, we assume $B=I$ for the sake of simplicity. We define two sparsifiers in this example, and show a stability result which motivates our main theorems in the next section. 

Let $\mathcal{G}_S$ be defined as follows:
\begin{center}
$\mathcal{G}_S$:=$\{$
\(
G_1=
\begin{bmatrix}
1 & 0\\
0 & 0\\
\end{bmatrix}
\),
\(
G_2=
\begin{bmatrix}
0 & 0\\
0 & 1\\
\end{bmatrix}
\)
$\}$.
\end{center}

The closed-loop behaviour of the system is given by the following equation.
\begin{eqnarray}
\dot{x}&=&Ax+Iu=Ax-K\overline{x}\nonumber\\
&=&Ax-KG_{i}x=(A-KG_{i})x, \textrm{ } i \in (1,2) \nonumber
\end{eqnarray}

From \cite{dlib}, we know that there exists a switching scheme to guarantee stability if there exists a convex Hurwitz combination of the subsystems. We denote this convex combination by $\overline{A}$.
\begin{eqnarray}
\overline{A} = \sum_{i} \alpha_iA_i \textrm{ where } \sum_i \alpha_i = 1
\label{eqn41}
\end{eqnarray}
Next, we investigate the existence of a convex combination for a general state matrix $A$ of the following form.
\[
A = 
\begin{bmatrix}
a_{1} & a_{2} \\
a_{3} & a_{4} \\
\end{bmatrix}
\]
Let $K=kI$, where $k \in \mathbb{R}$. 
\begin{eqnarray}
A_{1} = 
\begin{bmatrix}
a_{1}-k & a_{2} \\
a_{3} & a_{4} \\
\end{bmatrix},
\quad
A_{2} = 
\begin{bmatrix}
a_{1} & a_{2} \\
a_{3} & a_{4}-k \\
\end{bmatrix}\nonumber
\end{eqnarray}

Let us choose $\alpha_i = \frac{1}{2}$ $\forall i$. Therefore, from (\ref{eqn41}) we obtain the following value of $\overline{A}$.
\[
\overline{A} =\frac{1}{2}A_1+\frac{1}{2}A_2=A-\frac{k}{2}I
\]
Let $\lambda$ be an eigenvalue of $A$, and $v$ be the corresponding eigenvector.
\begin{eqnarray}
Av=\lambda v\implies\quad k\frac{1}{2}Iv=\frac{k}{2}v\nonumber\\
\implies (A-\frac{k}{n}I)v=(\lambda-\frac{k}{2})v\nonumber\\
\overline{A}v=(\lambda-\frac{k}{2})v
\end{eqnarray}

From the above equation, we can conclude that the eigenvalues of $\overline{A}$ can be arbitrarily placed by choosing a proper $k$. For $k>2\textrm{Re}(\lambda_{max}(A))$, we obtain a Hurwitz convex combination of the two subsystems.
%\end{proof}

%This example tells us that there definitely exist an explicit class of sparsifiers which belong to $\mathcal{G}_S$ that can stabilize a dyanmical system with sparsified feedback. We only considered a simple system which had the $B$ matrix as the identity matrix. 

In the next section, we generalize the result to systems of arbitrary dimensions.

\section{Proof of Stability}

Consider the system \(\dot{x}=A_ix, i \in (1,2,...,n)\). Also let \(A_i\) be non-Hurwitz \(\forall i\).
\newtheorem{ThmOne}{Theorem}
\begin{ThmOne}
If the matrices \(A_i\) have a Hurwitz convex combination, then there exists a state-dependent switching strategy that makes the switched linear system quadratically stable \cite{dlib}.\\
\end{ThmOne}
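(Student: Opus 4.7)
The plan is to prove this via a construction of a common quadratic Lyapunov function $V(x) = x^T P x$ for the switched system, combined with a state-dependent ``min-switching'' rule. The starting point is the hypothesis that some convex combination $\bar{A} = \sum_i \alpha_i A_i$ is Hurwitz. By the classical Lyapunov equation for stable linear systems, I can pick a positive definite $Q$ and obtain a unique positive definite $P$ satisfying
\begin{equation}
\bar{A}^T P + P \bar{A} = -Q.
\end{equation}
Expanding $\bar{A}$ gives $\sum_i \alpha_i (A_i^T P + P A_i) = -Q \prec 0$. This is the algebraic fact that makes the rest of the argument go through.

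Next I would define, for each $x \neq 0$, the switching rule
\begin{equation}
\sigma(x) \in \arg\min_{i \in \{1,\dots,n\}} x^T (A_i^T P + P A_i) x,
\end{equation}
breaking ties by the smallest index. The key observation is a pointwise averaging inequality: since $\sum_i \alpha_i x^T (A_i^T P + P A_i) x = -x^T Q x$, and $\alpha_i \geq 0$ with $\sum_i \alpha_i = 1$, the minimum over $i$ must satisfy
\begin{equation}
\min_{i} x^T (A_i^T P + P A_i) x \leq -x^T Q x.
\end{equation}
Then along any trajectory of the switched system using $\sigma$,
\begin{equation}
\dot{V}(x) = x^T (A_{\sigma(x)}^T P + P A_{\sigma(x)}) x \leq -x^T Q x \leq -\frac{\lambda_{\min}(Q)}{\lambda_{\max}(P)} V(x),
\end{equation}
which yields the exponential decay of $V$, i.e., quadratic stability.

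The step I expect to require the most care is making the switching rule well-posed as a dynamical system. The sets $\Omega_i = \{x : \sigma(x) = i\}$ are cones whose boundaries are the quadrics where two indices tie in the $\arg\min$, and a trajectory can hit such a boundary and slide along it. I would handle this either by invoking Filippov solutions (in which the right-hand side is replaced by a convex hull on the switching surface, and the averaging inequality above still gives $\dot{V} \leq -x^T Q x$ in the Filippov sense), or by adopting a hysteresis/dwell-time variant that produces ordinary solutions with the same decay estimate. Either route reduces everything to verifying that $V$ decreases at the guaranteed rate on every mode and on every sliding surface, which follows from the convex-combination inequality established in the previous step. With quadratic stability in hand, global exponential stability of the switched system under the constructed state-dependent rule follows by standard comparison of $V$ to $\|x\|^2$.
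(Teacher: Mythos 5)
Your proof is correct and follows essentially the same route as the paper: solve the Lyapunov equation for the Hurwitz convex combination $\bar{A}$, expand it as $\sum_i \alpha_i (A_i^T P + P A_i) = -Q$, and use the averaging inequality to conclude that at every $x \neq 0$ at least one mode makes $\dot{V}$ negative, which drives the state-dependent switching rule. Your version is in fact somewhat sharper than the paper's: the $\arg\min$ rule yields the uniform decay estimate $\dot{V} \le -x^T Q x$ (the paper only argues $\dot{V} < 0$ pointwise on each cone $\Omega_i$, without a rate), and the paper never addresses well-posedness of the switched dynamics on the boundaries of the $\Omega_i$, which you handle via Filippov solutions or hysteresis.
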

\begin{proof}
If there \(\exists\) a convex combination such that \(\overline{A}=\sum_{i=1}^{n} \alpha_iA_i \textrm{ where } \sum_{i=1}^{n} \alpha_i = 1\) is Hurwitz, then by the definition of Lyapunov stability, for each positive definite matrix $Q$,  there exists a positive definite matrix $P$ such that
\begin{eqnarray}
\overline{A}^{T}P+P\overline{A}=-Q\nonumber
\end{eqnarray}
This implies that
\begin{eqnarray}
&&(\sum_{i=1}^{n} \alpha_iA_i)^{T}P+P(\sum_{i=1}^{n} \alpha_iA_i)=-Q\nonumber \\
\implies&& \sum_{i=1}^{n} \alpha_i(A_i^{T}P+PA_i) = -Q\nonumber \\
\implies&& x^T\sum_{i=1}^{n}\alpha_i(A_i^TP+PA_i)x=-x^TQx<0\nonumber\\
&&\forall x \in \mathbb{R}^{n}\setminus\{0\} \nonumber\\
\implies&& \exists i \in [1,n]\textrm{ s.t. }x^T\alpha_i(A_i^TP+PA_i)x<0\nonumber\\
&&\forall x \in \mathbb{R}^{n}\setminus\{0\} \nonumber
\end{eqnarray}
Let $\Omega_{i}$ be defined as follows:
\[\Omega_i:=\{x:x^T(A_i^TP+PA_i)x<0\},\quad i=[1,n]\]
Let \(x \in \mathcal{R}^n \setminus\{0\}\). Then for a given positive definite matrix $Q$, \( -x^TQx<0\).
Since there exists a convex combination of \(\overline{A}=\sum_{i=1}^{n}\alpha_iA_i\) which is Hurwitz and satisfies the following Lyapunov equation \[-Q=\overline{A}^TP+P\overline{A}\] \[\implies -Q=\overline{A}^TP+P\overline{A}=\sum_{i=1}^{n}\alpha_i(A_i^TP+PA_i)\]
 \[\implies x^T(\sum_{i=1}^{n}\alpha_i(A_i^TP+PA_i))x=-x^TQx<0\]

\[\implies\exists \textrm{ at least one } i \in \mathcal{P} \textrm{ s.t. }x^T(A_i^TP+PA_i)x<0\]
\[\implies x \in  \cup_{i \in \mathcal{P}} \Omega_i\]
On the other hand, it can trivially be shown that $x \in \cup_{i \in [1,n]} \Omega_i \implies x \in \mathcal{R}^n \setminus\{0\}$. Therefore, the following holds true:
 \[\mathbb{R}^n \setminus\{0\}\subseteq \bigcup_{i\in[1\,n]} \Omega_i\]
The switching strategy is based on keeping the system \(\dot{x}=A_ix\) active in \(\Omega_i\) since it will decrease the function \(V(x):=x^TPx\) along solutions to the differential equation.
\end{proof}

Our next goal is to show that for linear stabilizable dynamical systems, the class of sparsifiers $\mathcal{G}_S$ can always make the convex combination of subsystems in a CSD dynamical system framework Hurwitz. Before we start, let us define the following:
\begin{eqnarray}
A_i=A-\tilde{K}G_i,\quad i \in [1,.,\lceil {\frac{n}{S}} \rceil]
\end{eqnarray}
\newtheorem{ThmTwo}[ThmOne]{Theorem}
\begin{ThmTwo}
Given a stabilizable dynamical system and the class of sparsifiers $\mathcal{G}_S$, there always exists a convex combination of \(A_i\)'s, which is Hurwitz.
\end{ThmTwo}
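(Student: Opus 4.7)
The plan is a direct constructive argument: exhibit an explicit Hurwitz convex combination by using uniform weights together with a natural choice of the free scalar parameters $g_i$ in the class $\mathcal{G}_S$. Because the hypothesis on a single $G_i$ is used only to rule out trivial solutions, the work is really to show that \emph{averaging} the sparsifiers already recovers full rank, after which stabilizability of $(A,B)$ finishes the argument.

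The key structural fact about $\mathcal{G}_S$ that I would isolate first is that each $G_i$ is a diagonal matrix whose support is a contiguous block of indices obtained by right-shifting the support of $G_1$, and that the supports of $G_1,\ldots,G_m$ (with $m := \lceil \tfrac{n}{S}\rceil$) partition $\{1,\ldots,n\}$. Exploiting the freedom to choose each scalar $g_i$, I would set $g_1=\cdots=g_m=g$ for some fixed $g>0$. The partition property then immediately gives
\[
\sum_{i=1}^{m} G_i \;=\; g\, I_n.
\]

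Next I would take uniform convex weights $\alpha_i = 1/m$. Using the definition $A_i = A - \tilde{K} G_i$ from (7), this yields
\[
\overline{A} \;=\; \sum_{i=1}^{m} \alpha_i A_i \;=\; A \,-\, \frac{g}{m}\, \tilde{K}.
\]
Identifying $\tilde{K}$ with the feedback term $BK$ as in the problem formulation, we have $\overline{A} = A - (g/m)\, B K$. Since $(A,B)$ is stabilizable, there exists a gain $F$ with $A - BF$ Hurwitz; choosing $K := (m/g)\, F$ then makes $\overline{A} = A - BF$ Hurwitz, which is the desired convex combination.

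There is no real technical obstacle here: the class $\mathcal{G}_S$ is designed precisely so that its sum is a scalar multiple of the identity, which collapses the averaged closed-loop matrix to a full-state feedback form and lets stabilizability do the rest. The only point requiring mild care is verifying that the supports of $G_1,\ldots,G_m$ truly cover all of $\{1,\ldots,n\}$ when $n$ is not an integer multiple of $\lfloor n/S \rfloor$; this is handled by interpreting the last sparsifier in the class as picking up whatever coordinates remain, so the partition property is preserved.
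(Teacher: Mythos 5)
Your argument is correct and is essentially the paper's own proof: both rest on the fact that the diagonal supports of the $G_i$ partition $\{1,\ldots,n\}$, so that $\sum_i \alpha_i G_i$ is an invertible diagonal matrix $\Sigma$, after which stabilizability supplies a gain $K$ with $A-BK$ Hurwitz that is absorbed via $\tilde{K}=K\Sigma^{-1}$. Your choice of uniform weights and equal $g_i$ just makes $\Sigma$ a scalar multiple of the identity, a clean special case of the paper's slightly more general $\Sigma=\mathrm{diag}(\alpha_i g_i)$ with arbitrary $\alpha_i, g_i>0$.
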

\begin{proof}
The convex combination is defined as \(\overline{A}=\sum_{i} \alpha_iA_i \textrm{ where } \sum_{i} \alpha_i = 1\)
\begin{eqnarray}
\overline{A}&=&\sum_{i} \alpha_iA_i=\sum_{i} \alpha_i(A-B\tilde{K}G_i) \\
&=&\sum_{i}\alpha_i(A-B\tilde{K}G_i) \nonumber \\
&=&A-B\tilde{K}\Sigma \nonumber\\
%&\textrm{ where }& K^*=\tilde{K}\Sigma\nonumber\\
\Sigma &=&\textrm{diag}(\alpha_1g_1,\hdots,\alpha_1g_1\hdots\alpha_{\lceil {\frac{n}{S}} \rceil} g_{\lceil {\frac{n}{S}} \rceil})\nonumber
\end{eqnarray}
Since the system is stabilizable, $\exists K \textrm{ such that} (A-BK)$ is stable. Moreover, if we choose $\alpha_i>0$ and $g_i>0$ $\forall i$ then $\Sigma^{-1}$ exists. Therefore, $\tilde{K}=K\Sigma^{-1}$.
\end{proof}

Based on Theorem 1, it is clear that there exists a stabilizing switching control scheme because there exists a Hurwitz convex combination of the individual subsystems.
%{\bf Remark:} Each subsystem \(A_i\) is assumed to be non-Hurwitz in this case. If any subsystem \(A_j\) is Hurwitz, the solution is trivially to have the system stay in that subsystem \(\dot{x}=A_jx\) corresponding to \(\Omega_j\).

\section{Simulation Results}
We consider the following system matrices, and the sparsifier class.
\[
A=
\left[\begin{array}{cc}
1 & 0\\
0 & 2\\
\end{array}\right]
,
B=
\left[\begin{array}{cc}
1 & 0\\
0 & 1\\
\end{array}\right]
\]
%Our goal is to obtain a class of stabilizing sparsifiers which will stabilize the system via switching.\\
\begin{center}
$\mathcal{G}_S$:=$\{$
\(
G_1=
\begin{bmatrix}
g_1 & 0\\
0 & 0\\
\end{bmatrix}
\),
\(
G_2=
\begin{bmatrix}
0 & 0\\
0 & g_2\\
\end{bmatrix}
\)
$\}$.
\end{center}
Clearly the system (A, B) is stabilizable (in fact, controllable). A linear controller based on the gain 
\[
K=
\begin{bmatrix}
4 & 0\\
0 & 4\\
\end{bmatrix}
\]
will make $A-BK$ Hurwitz. Without loss of generality, we let $\tilde{K}=I$. Letting $\alpha_i=\frac{1}{2}$, $\forall i$, and $g_1=g_2=8$ leads us to the following:
\[
\tilde{K}\Sigma=
\begin{bmatrix}
4 & 0\\
0 & 4\\
\end{bmatrix}
\]
By Theorem 2, we can find a class of sparsifiers $\mathcal{G}_S$ that yields a Hurwitz convex combination of the subsystems. Using Theorem 1, we can find a common Lyapunov function, and a state-dependent switching strategy.

Let 
\(
Q =I 
%\begin{pmatrix}
%1 & 0 \\
%0 & 1 \\
%\end{pmatrix}
\). 
Substituting 
\(
\overline{A} = 
\begin{bmatrix}
-3 & 0 \\
0 & -2 \\
\end{bmatrix} 
\)
%\end{center}
in the Lyapunov equation $\overline{A}^TP+P\overline{A}=-Q\nonumber$ leads to the following value of $P$:
\[
P=
\begin{bmatrix}
1/6 & 0 \\
0 & 1/4 \\
\end{bmatrix}
\]
This creates two conic regions which determine the switching strategy between the sparsifying transforms.
\begin{eqnarray}
\Omega_{1} :&=& (x:x^{T} (A_{1}^{T} P + PA_{1})x<0)\nonumber\\
 &=& ({x:-2.333x_{1}^2+x_{2}^2<0})\nonumber\\
\Omega_{2} :&=& ({x:x^{T} (A_{2}^{T} P + PA_{2})x<0})\nonumber\\
 &=& ({x:\frac{1}{3}x_{1}^2-3x_{2}^2<0})\nonumber
\end{eqnarray}

\begin{figure}[tb]
\caption{State Trajectory}
\label{FigTwo}
\includegraphics[scale=0.6]{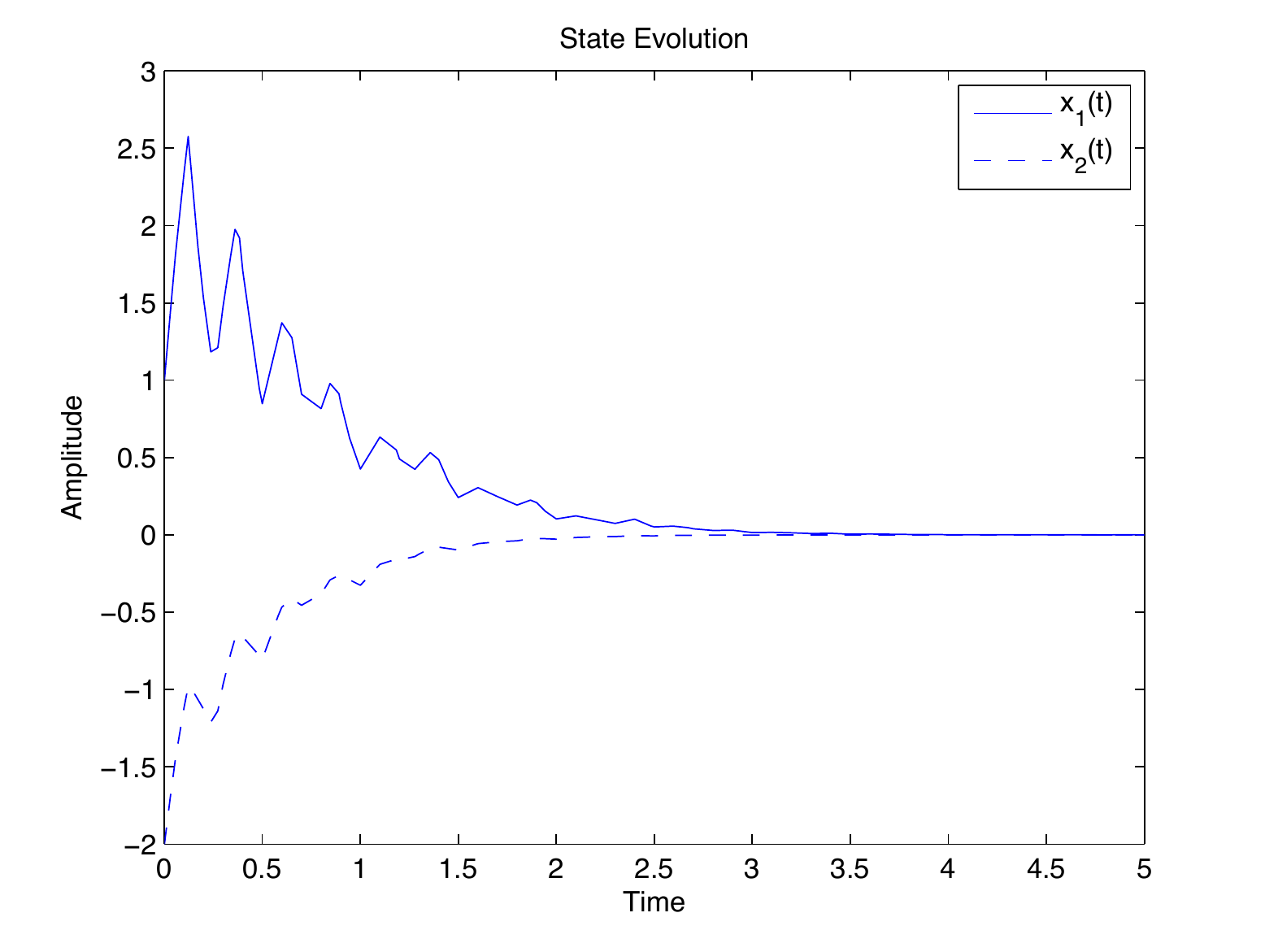}
\end{figure}

\begin{figure}[h]
\caption{Phase Portrait}
\label{FigThree}
\includegraphics[scale=0.6]{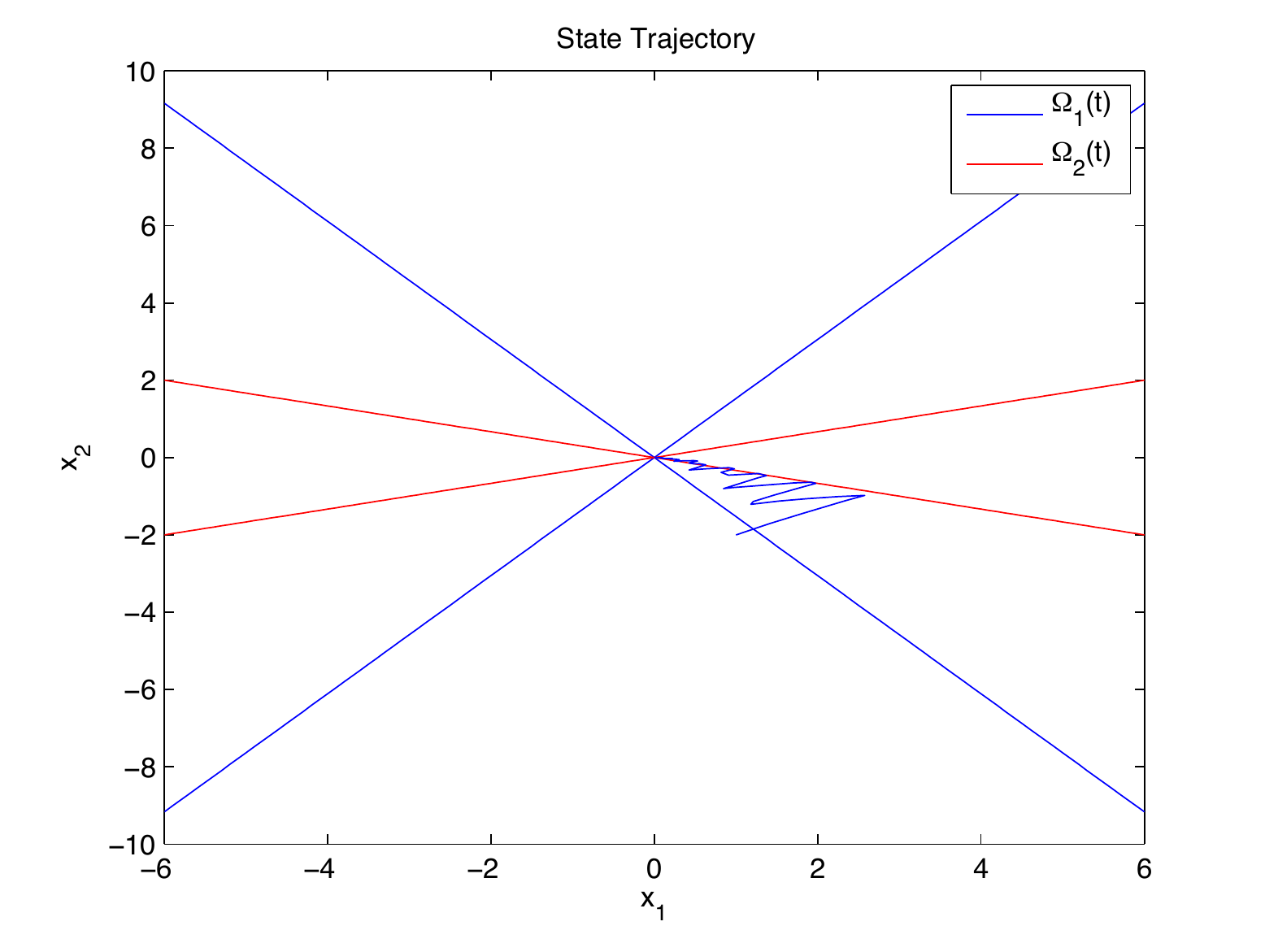}
\end{figure}
Figure \ref{FigTwo} is the state trajectory with initial value of $x_0=[2$ $1 ]^T$. Figure \ref{FigThree} shows the phase portrait. The portrait shows the trajectories moving to the origin in a region where both subsystems decrease the Lyapunov function.

Figures \ref{FigFour} and \ref{FigFive} show the state evolution and phase portraints, respectively, for $B=[1\quad 1]'$. The initial condition is chosen as $x_0=\begin{bmatrix}
2 & -1\end{bmatrix}^T$. Once again, stability is observed via the state-dependent switching strategy.
\begin{figure}[h]
\caption{State Evolution}
\label{FigFour}
\includegraphics[scale=0.6]{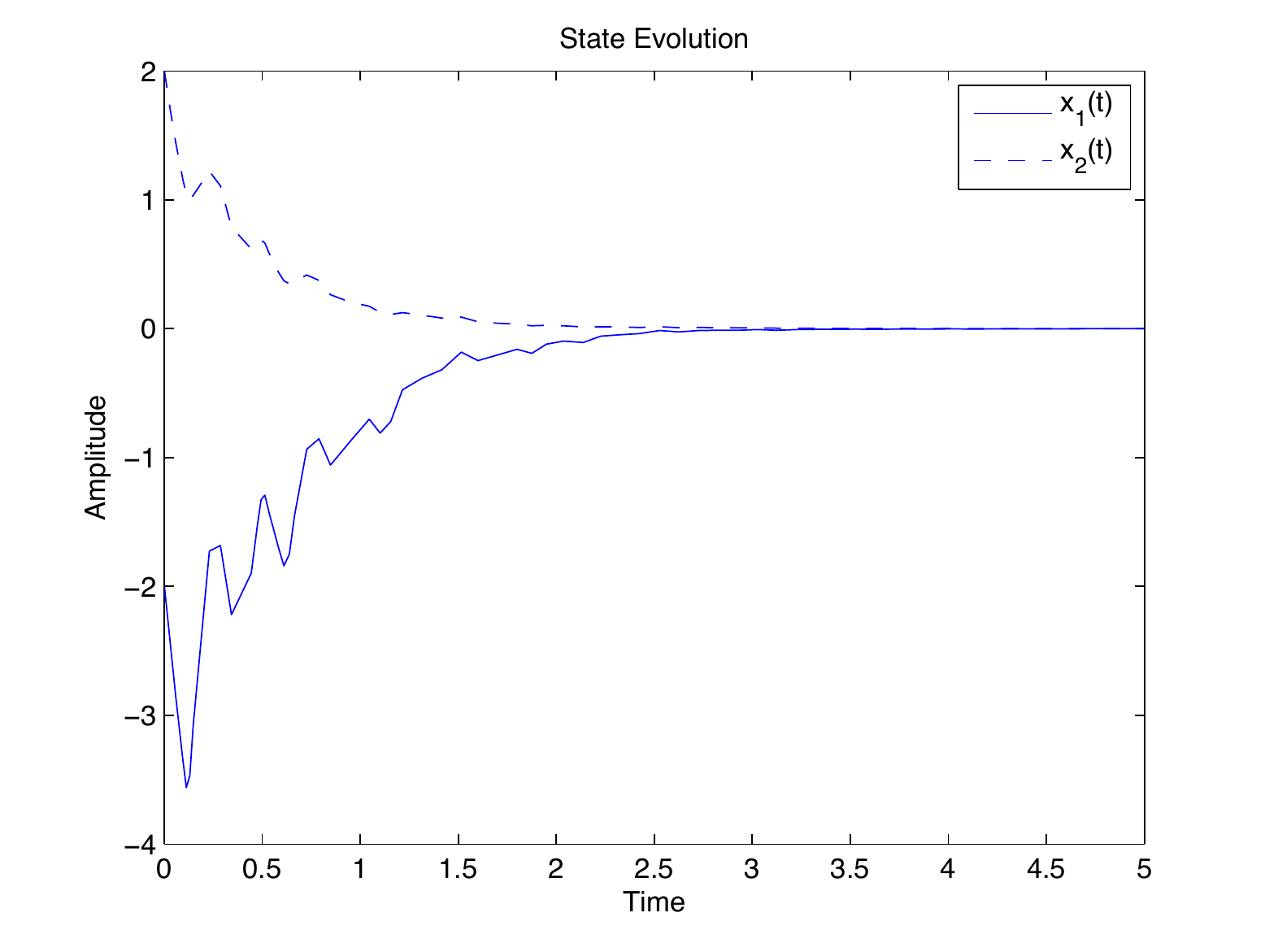}
\end{figure}
\begin{figure}[h]
\caption{Phase Portrait}
\label{FigFive}
\includegraphics[scale=0.6]{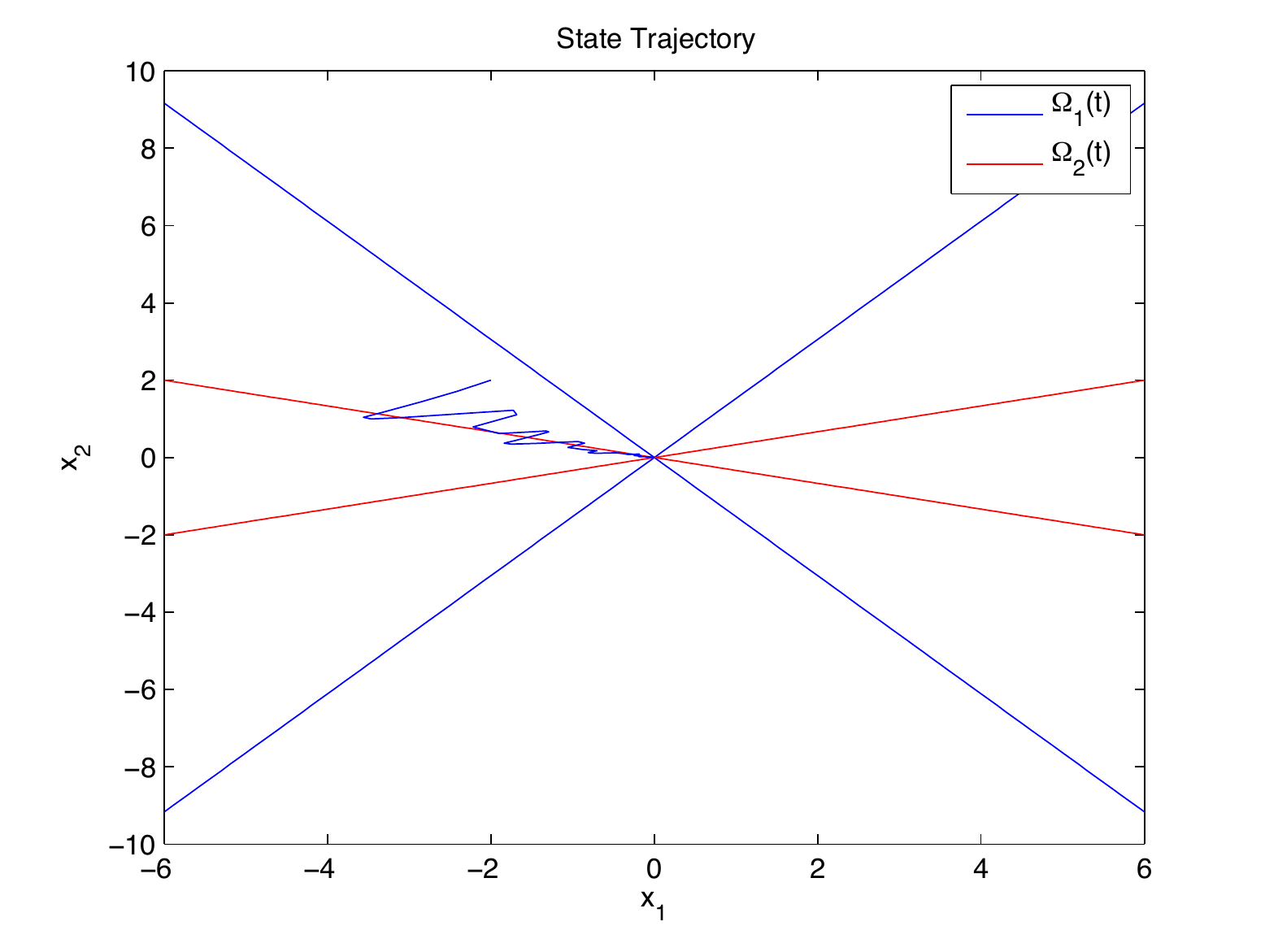}
\end{figure}
\\
\\

Next, we consider the following coupled linear system:
\begin{eqnarray}
A=
\begin{bmatrix}
1 & 1\\
0 & 1\\
\end{bmatrix}
\quad
B=
\begin{bmatrix}
0 \\
1 \\
\end{bmatrix}\nonumber
\end{eqnarray}
The system has both eigenvalues at $\lambda = 1$. The system is stabilizable (in fact, controllable). A linear controller based on the gain
\[
K=
\begin{bmatrix}
8 & 8
\end{bmatrix}
\]
will make $A-BK$ Hurwitz. Repeating, as before, we conclude that if $\alpha_i = \frac{1}{2}$ $\forall i$, $g_1=g_2=8$, and \(\tilde{K} =
\begin{bmatrix}
1 & 1
\end{bmatrix}
\)
we obtain the following equation: 
\[
\tilde{K}\Sigma=
\begin{bmatrix}
8 & 8\\
\end{bmatrix}
\]
Following the same procedure as before, we obtain the following value for $P$.
\[
P=
\begin{bmatrix}
6.5 & 1.75 \\
1.75 & 0.75 \\
\end{bmatrix}
\]
\begin{eqnarray}
\Omega_{1} :&=& (x:x^{T} (A_{1}^{T} P + PA_{1})x<0)\nonumber\\
 &=& ({x:8x_1x_2+5x_2^2-15x_1^2<0})\nonumber\\
\Omega_{2} :&=& ({x:x^{T} (A_{2}^{T} P + PA_{2})x<0})\nonumber\\
 &=& ({x:13x_1^2-8x_1x_2-7x_2^2<0})\nonumber
\end{eqnarray}
$P$ generates two conic regions which determine the switching strategy between the sparsifiers. Figures \ref{FigA} and \ref{FigB} show the state evolution and phase portraits respectively. The initial values is chosen to be $x_0 = [1$ $-0.5]^T$. 
\begin{figure}[!h]
\caption{State Evolution}
\label{FigA}
\includegraphics[scale=0.6]{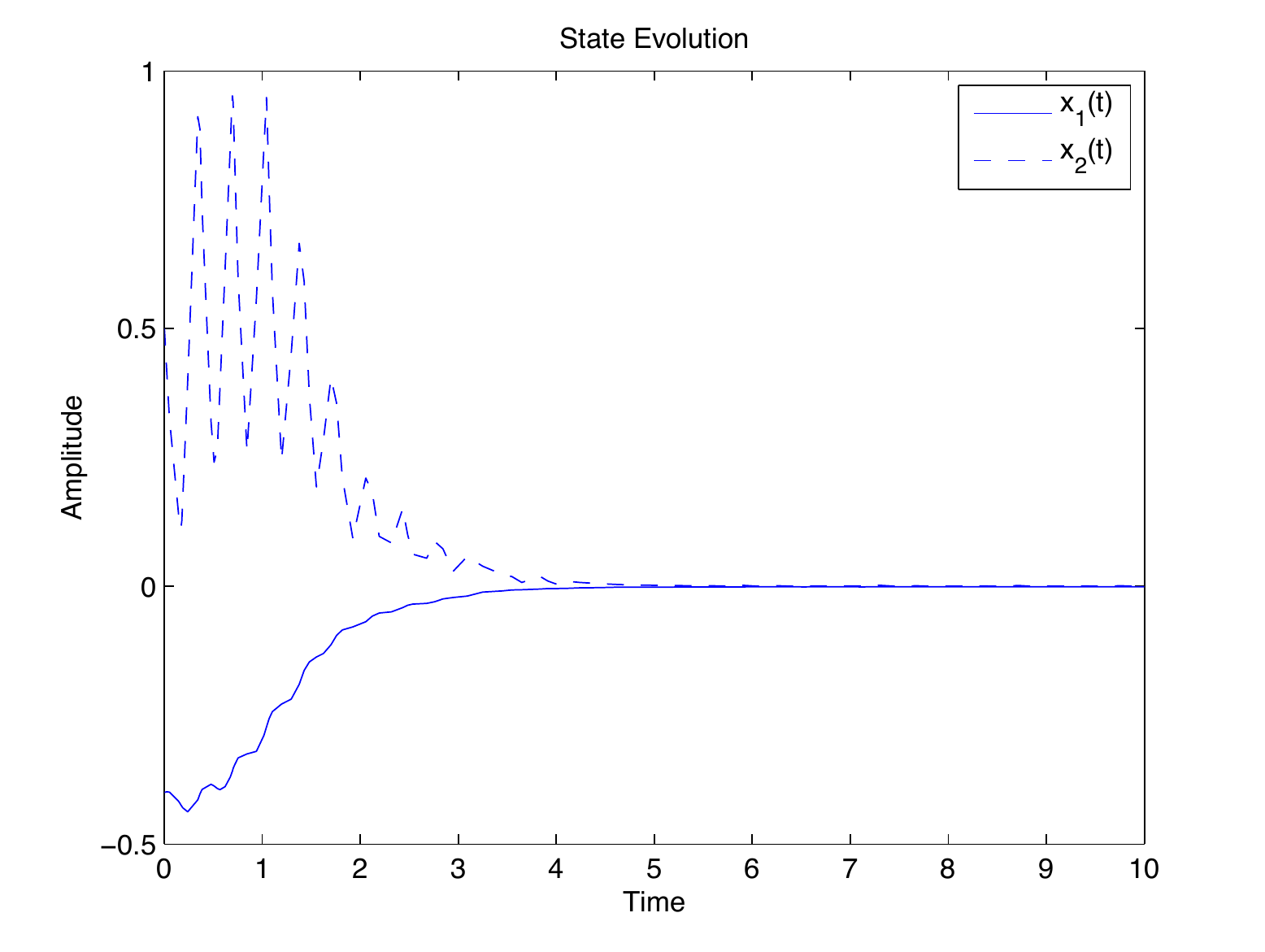}
\end{figure}
\begin{figure}[!h]
\caption{Phase Portrait}
\label{FigB}
\includegraphics[scale=0.6]{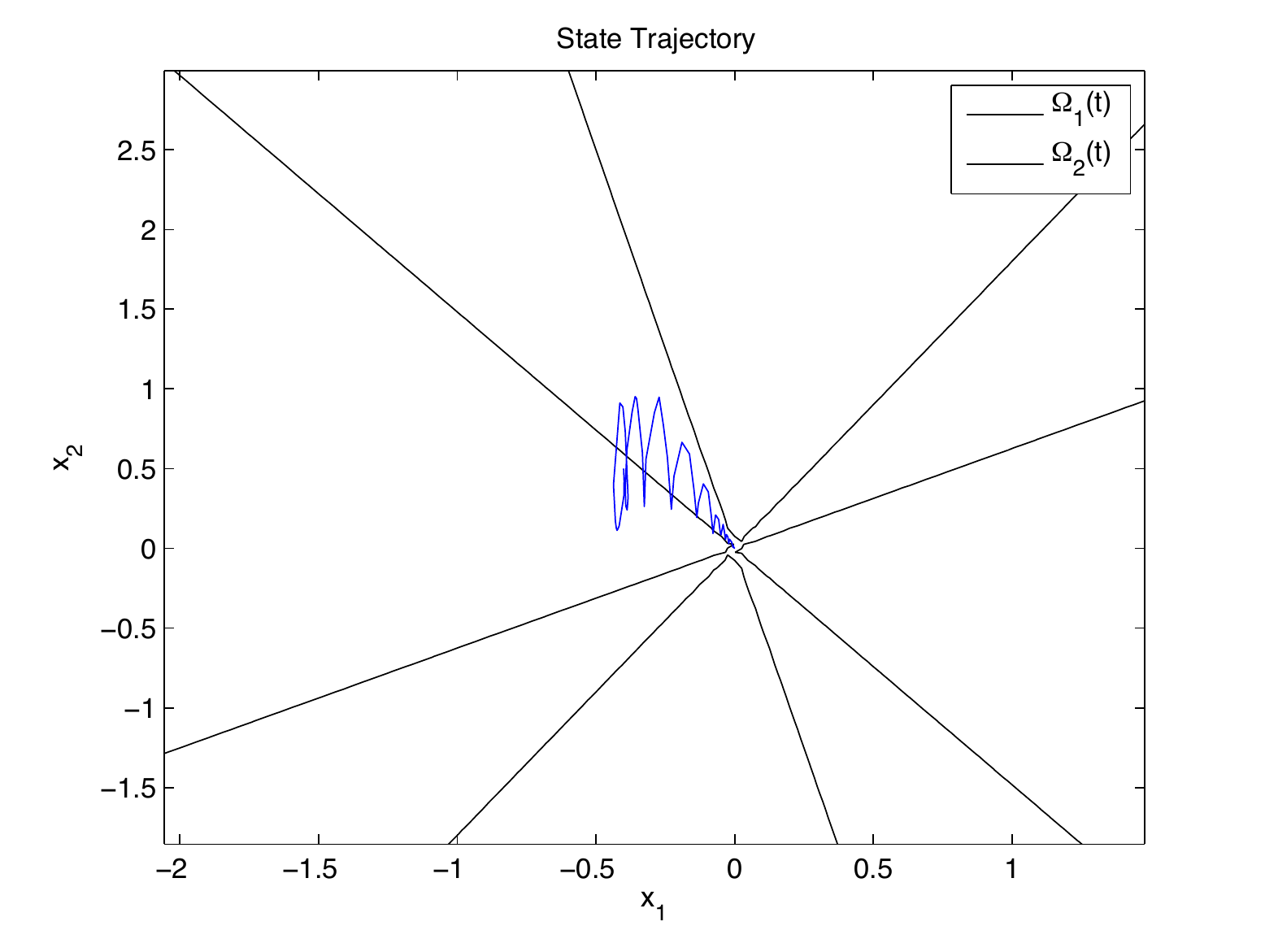}
\end{figure}

\section{Conclusion}

%We have presented our findings on stabilization in a continuous time linear dynamical system using state feedback when compressive sampling techniques are used for state measurement and reconstruction. We start by introducing the problem of using only one sparsifier in a dynamical system when the number of unstable poles is higher than the $S$ parameter of the CSD. We proceeded to formulate a framework using a set $\mathcal{G}_S$ of multiple sparsifiers to overcome this problem. This framework gave rise to a connection between sparsity in control systems and switching systems theory. 
In this paper, we have investigated the effect of switching within a class of sparsifiers, introduced in \cite{bhatta}, on the stability of a linear plant in continuous time settings. Initially, we analyzed the problem of stabilization in low dimensional systems; based on which we addressed the problem of stabilization in systems of arbitrary dimensions. A key contribution in this paper is the construction of a general class of linear non-invertible sparsifiers, and a corresponding switching strategy to stabilize the overall system.

Some of the future directions for research include finding other classes of sparsifiers which might be useful in stabilizing both linear and non-linear dynamical systems. We are currently addressing the problem of limiting the number of switches. We believe that a cost on the number of switches will provide a metric on different classes of sparsifiers, and shed some light on the problem of finding the optimal class.
\bibliographystyle{abbrv}
\bibliography{biblio}
\end{document}